\documentclass[12pt,letterpaper,reqno]{amsart}

\usepackage{amsthm}
\usepackage{mathrsfs}
\usepackage{amsfonts,amssymb,amsmath}
\input amssym.def 
\input amssym.tex

\addtolength{\textwidth}{2cm} 
\addtolength{\hoffset}{-1cm} 
\addtolength{\marginparwidth}{-1cm}

\numberwithin{equation}{section}

\usepackage[dvips]{graphicx}
\usepackage{hyperref}
\usepackage{thmtools}

\declaretheoremstyle[
bodyfont=\normalfont,
]{remstyle}

\declaretheorem[style=remstyle, name=Remark,numberwithin=section]{rem}
\declaretheorem[style=remstyle, name=Question]{question}

\newtheorem{defi}{\bf Definition}[section]

\newtheorem{conj}{\bf Conjecture}[section]
\newtheorem{cond}{\bf Condition}[section]

\newtheorem{lemma}{\bf Lemma}[section]

\newtheorem{theorem}{\bf Theorem}[section]

\newtheorem{corr}{\bf Corollary}[section]
\newtheorem{corollary}{\bf Corollary}[section]

\newcommand\be{\begin{eqnarray*}}
\newcommand\ee{\end{eqnarray*}}
\newcommand\beq{\begin{equation}}
\newcommand\eeq{\end{equation}}
\newcommand\eps{\epsilon}

\newcommand{\EXP}{\mathbb{E}}

\newcommand\ben{\begin{eqnarray}}
\newcommand\een{\end{eqnarray}}

\begin{document}

\title[On sets with small additive doubling in product sets.] 
{On sets with small additive doubling in product sets.}

\author{Dmitry Zhelezov}
\thanks{Department of Mathematical Sciences, 
Chalmers University Of Technology and University of Gothenburg} 
\address{Department of Mathematical Sciences, 
Chalmers University Of Technology and University of Gothenburg,
41296 Gothenburg, Sweden} \email{zhelezov@chalmers.se}

\subjclass[2000]{11B25 (primary).} \keywords{product sets, additive energy, generalized arithmetic progressions, prime divisors}

\date{\today}

\begin{abstract}
	Following the sum-product paradigm, we prove that for a set $B$ with polynomial growth, the product set $B.B$ cannot contain large subsets with size of order $|B|^2$ with small doubling. It follows that the additive energy of $B.B$ is asymptotically $o(|B|^6)$. In particular, we extend to sets of small doubling and polynomial growth the classical Multiplication Table theorem of Erd\H{o}s saying that $|[1..n]. [1..n]| = o(n^2)$. 
\end{abstract}

\maketitle

\section{Introduction}
  The famous sum-product conjecture of Erd\H{o}s and Szemer\'edi \cite{ES} states that for any $\epsilon > 0$ and there is a constant $c(\epsilon)$ such that for arbitrary of real numbers $B$ holds
  $$
  \max(|B.B|,|B+B|) \geq c|B|^{2-\epsilon},
 $$
 where $B.B = \{ bb'|\,\,b, b' \in B\}$ and $B + B = \{b + b'|\,\,b, b' \in B\}$. The former set is called the \emph{product} set of $B$ and the latter is the \emph{sumset}.

  The intuition behind this conjecture is that there are no approximate subrings in the set of reals, that is, a set must expand either with respect to addition or multiplication. However, it might be almost closed with respect to, say, only addition, so that $|B+B| < K|B|$. In this extreme case the inverse Freiman theorem tells that $B$ has a very rigid structure, namely that it looks like a generalized arithmetic (geometric, if the the set does not expand with respect to multiplication) progression.  Another important and active area of research in additive combinatorics is to extract structural information about sets which are sumsets or product sets (see, e. g. \cite{Shkr14} for one of the most recent results and references therein).  
  
  Combining these two lines of inquiry, it seems reasonable to ask if an arbitrary sumset (product) set can be multiplicatively (resp. additively) small, so a large portion of it looks like a geometric (resp. arithmetic) progression. 

  Some results were obtained in this direction. The author \cite{Zh14} proved that for sets of integers if $A = B.B$ with $|B| = n$, then the size of any arithmetic progression contained in $A$ is bounded by $O(n \log n)$, and this bound is sharp up to a multiplicative constant. Senger \cite{Senger} has  shown that the additively shifted product set $B.B + 1$ must have a large part outside of any generalized geometric progression of comparable length. In seems natural to consider a more general setting, namely the following questions. Let $B$ be a set of size $n$ and $K > 0$ be fixed.  
\begin{question}
How large can a set $A \subset B+B$ be if $|A.A| < K|A|$?
\end{question}
\begin{question}
How large can a set $A \subset B.B$ be if $|A+A| < K|A|$?  
\end{question}

It is worth noting that Question 1 is closely related to the famous Erd\H{o}s unit distance conjecture, which asserts that a set of $n$ points on the Euclidean plane defines at most $O(n^{1+o(1)})$ unit distances. Indeed, if we identify the set of points with a set of complex numbers $B$, then the Erd\H{o}s unit distance conjecture is equivalent to the following problem.

\begin{conj}[Erd\H{o}s unit distance conjecture] \label{conj:erdos}
	Let $B$ be a set of complex numbers of size $n$. Then
	$$
		\#\{ (z_1, z_2) \in B \times B |\,\, z_1 - z_2 \in \mathbb{S}^1 \} = O(n^{1+o(1)}),	
	$$
	where $\mathbb{S}^1$ is the unit circle. 
\end{conj}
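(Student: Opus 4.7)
The statement is the Erd\H{o}s unit distance conjecture, which has been open since 1946, so any concrete plan here is necessarily speculative. Nevertheless, the most natural strategy in the framework of this paper is to convert the unit-distance count into a sum-product question about $B - B$ restricted to $\mathbb{S}^1$, and then invoke small-doubling-in-sumset machinery of the kind motivated by Question 1.

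The first step is to rewrite
$$
E := \#\{(z_1,z_2) \in B \times B : z_1 - z_2 \in \mathbb{S}^1\} = \sum_{u \in U} r_{B-B}(u),
$$
where $U := (B - B) \cap \mathbb{S}^1$ and $r_{B-B}(u) := \#\{(z_1,z_2) \in B \times B : z_1 - z_2 = u\}$. The task then splits into bounding the cardinality $|U|$ of distinct unit-length differences, and bounding the multiplicity $\max_u r_{B-B}(u)$ (or a suitable average thereof). A trivial bound of $O(n^2)$ for $|U|$ and $O(n)$ for each $r_{B-B}(u)$ is hopeless; one needs to exploit structure.

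The key observation is that $U$ sits inside the multiplicative subgroup $\mathbb{S}^1 \subset \mathbb{C}^*$, so as a set it has trivially zero multiplicative doubling when the ambient group is the circle, and in particular $U \cdot U$ is highly constrained. At the same time $U \subset B - B$. This is exactly the regime of Question 1, read for the difference set: a subset of $B + B$ (equivalently $B - B$) that is multiplicatively structured must be small. If a sufficiently quantitative version of this principle were available — in parallel with the main theorem of this paper for product sets — one would combine it with an incidence estimate (Spencer--Szemer\'edi--Trotter for unit circles, or a polynomial-method refinement of Guth--Katz type) to recover $E = n^{1+o(1)}$.

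The main obstacle, and it is a decisive one, is that improving the classical $O(n^{4/3})$ incidence bound down to $n^{1+o(1)}$ \emph{is} the entire unit-distance problem, unresolved for over seventy years. All known attacks saturate at the Szemer\'edi--Trotter exponent because the only structural input being exploited is that two unit circles meet in at most two points. A real proof would presumably require a sharp classification of extremal configurations (regular polygons, lattice pieces, or subgroup-like $B$) or a genuinely new algebraic tool; the sum-product perspective of this paper may furnish partial rigidity for the set $U$, but on its own does not appear to close the gap between $n^{4/3}$ and $n^{1+o(1)}$.
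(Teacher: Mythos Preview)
The paper does not prove this statement: it is stated as Conjecture~\ref{conj:erdos} precisely because it is the open Erd\H{o}s unit distance problem, and the paper uses it only as motivation for Question~1. Your recognition that the problem is open and that any plan must be speculative is correct, and your sketch of why the sum-product viewpoint alone does not break the $n^{4/3}$ barrier is a fair summary of the state of affairs. There is no proof in the paper to compare against.
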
 

Of course, it is doesn't matter whether we count differences $z_1 - z_2$ or sums $z_1 + z_2$ since we can always consider another set 
$\{ B \cup -B \}$  sacrificing just a constant factor.

\begin{rem}
	Both Question 1 and Question 2 may be posed in a slightly more general setting akin to Conjecture \ref{conj:erdos}. Instead of bounding the size of  $A$ one might be interested in the number of \emph{pairs} $(b_1, b_2)$ s.t. $b_1 + b_2 \in A$ (resp. $b_1b_2 \in A$ for Question 2). We will call such a formulation \emph{counting with multiplicities}.
\end{rem}

\vspace{0.5em}
Conjecture \ref{conj:erdos} was verified by Schwartz, Solymosi and de Zeeuw \cite{SSZ} with $\mathbb{S}^1$ replaced with the roots of unity and by Schwartz \cite{S} in the case when $\mathbb{S}^1$ is restricted to a multiplicative subgroup of finite rank.  In turn, the following lemma was proved by Roche-Newton and the author  in order to estimate the additive energy of a sumset.

\begin{lemma}[\cite{OllyMe}] \label{lemma:sumset_multp_group}
	Let $\epsilon > 0$. Then there are positive constants $c(\epsilon), C(\eps)$ such that for any set $B$ of complex numbers the following holds. For any multiplicative group $\Gamma \subset \mathbb{C}^*$ of rank $c(\eps)\log |B|$, the number of pairs
	$$
		\{ (b_1,b_2) \in B \times B \,\,| \,\, b_1-b_2 \in \Gamma \}
	$$ 
is at most $C(\eps)|B|^{1+\epsilon}$.	
\end{lemma}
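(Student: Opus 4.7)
The plan is to use the Evertse--Schlickewei bound on $S$-unit equations together with a moment argument to estimate $E := \#\{(b_1,b_2) \in B\times B : b_1-b_2 \in \Gamma\}$. The key input is that for any $\delta \in \mathbb{C}^*$, the equation $\gamma_1-\gamma_2 = \delta$ has at most $C^{r+1}$ solutions in $(\gamma_1,\gamma_2) \in \Gamma^2$, with $C$ an absolute constant and $r$ the rank of $\Gamma$. Taking $r = c(\eps)\log|B|$ converts the factor $C^{r+1}$ into $|B|^{O(c(\eps))}$, which becomes $|B|^{\eps}$ once $c(\eps)$ is chosen sufficiently small.

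Concretely, writing $T_b = |B \cap (b+\Gamma)|$ so that $E = \sum_{b \in B} T_b$, I would begin with the second moment
\[
\sum_{b \in B} T_b^2 \;=\; \#\{(b,b_1,b_2) \in B^3 : b - b_1,\, b-b_2 \in \Gamma\}.
\]
The diagonal $b_1 = b_2$ contributes exactly $E$; for the off-diagonal part, setting $\delta = b_2-b_1 \neq 0$ and $\gamma_i = b-b_i$ reduces to counting solutions of $\gamma_1-\gamma_2 = \delta$ in $\Gamma^2$, giving $\sum_b T_b^2 \leq E + C^{r+1}|B|^2$. A Cauchy--Schwarz step $E^2 \leq |B| \sum_b T_b^2$ then yields $E \leq C^{(r+1)/2}|B|^{3/2}$, i.e., a bound of the form $|B|^{3/2 + O(\eps)}$.

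The resulting bound is off by a factor of $|B|^{1/2}$ from the target, and closing this gap is the main obstacle, since Cauchy--Schwarz from the second moment alone cannot achieve $|B|^{1+\eps}$. I would attack it by either (i) iterating to higher moments --- using the Evertse--Schlickewei--Schmidt theorem on systems of simultaneous $S$-unit equations to bound $\sum_b T_b^k$ and then applying H\"older's inequality with the exponent $k$ optimized against $\eps$, with careful bookkeeping of the degenerate tuples in which some of the $b_i$ coincide --- or (ii) interpreting $E$ as an incidence count between the points $B\times B \subset \mathbb{C}^2$ and the complex lines $\{z-w=\gamma\}_{\gamma \in \Gamma \cap (B-B)}$, applying T\'oth's complex Szemer\'edi--Trotter theorem, and then separately bounding $|\Gamma \cap (B-B)|$ via a sum--product estimate for subsets of multiplicative groups of bounded rank. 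In either route, the constants $c(\eps)$ and $C(\eps)$ emerge by tracking all rank-dependent factors along the chain of inequalities and calibrating $c(\eps)$ so that they are absorbed into $|B|^{\eps}$.
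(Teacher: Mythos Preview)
The paper does not prove this lemma at all; it is quoted from \cite{OllyMe} and used as a black box. So there is no ``paper's proof'' to compare against, and your task here would really be to reconstruct the argument from that reference.

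On the substance of your sketch: the second-moment computation is correct and does give $E \ll C^{(r+1)/2}|B|^{3/2}$, and you are right that this is a genuine barrier. However, neither of your two proposed escapes works as written.

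\textbf{Route (i).} Bounding $\sum_b T_b^k$ directly does not help. For a tuple $(b,b_1,\dots,b_k)$ with at least two of the $b_i$ distinct, the two-variable unit equation already pins $b$ down to $O(C^{r})$ values, but that is all you get from the simultaneous system: the remaining equations $\gamma_1-\gamma_i=b_i-b_1$ just determine $\gamma_i$ from $\gamma_1$ and add no new constraint on $\gamma_1$ beyond membership in $\Gamma$. Hence $\sum_b T_b^k \le E + O_k(C^{r}|B|^k)$, and H\"older yields only $E \ll C^{r/k}|B|^{2-1/k}$, which for $k>2$ is \emph{worse} than your $k=2$ bound. The object that actually gains with $k$ is the number of \emph{walks} of length $k$ in the graph on $B$ with edges $b\sim b'$ iff $b-b'\in\Gamma$: one has $W_k\ge E^k/|B|^{k-1}$, while each walk records a solution of $\gamma_1+\cdots+\gamma_k=b_0-b_k$ in $\Gamma^k$, so the $k$-variable Evertse--Schlickewei--Schmidt bound gives $W_k\le |B|^2\cdot\exp(O_k(r))$ for the nondegenerate part. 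Combining these yields $E\le \exp(O_k(r)/k)\,|B|^{1+1/k}$, which with $k\sim 1/\epsilon$ and $c(\epsilon)$ chosen small does reach $|B|^{1+\epsilon}$. The degenerate solutions (vanishing subsums, corresponding to walks that revisit vertices) require a separate inductive treatment; this is where the real work lies, and your sketch does not address it.

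\textbf{Route (ii).} This fails immediately: the point set is $B\times B$, so $|P|=|B|^2$, and the Szemer\'edi--Trotter bound $I(P,L)\ll |P|^{2/3}|L|^{2/3}+|P|+|L|$ already contains the term $|P|=|B|^2$. No choice of $L$ can bring the incidence count below $|B|^2$, let alone to $|B|^{1+\epsilon}$.
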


This lemma actually answers Question 1. Indeed, by the inverse Freiman theorem, $|A.A| < K|A|$ implies that $A$ is contained in a multiplicative subgroup $\Gamma$ of rank at most $K$ and then it follows that $|A| = |B|^{1+o(1)}$. We refer the reader to \cite{OllyMe} for details\footnote{For example, one can see that in fact even if $K$ is as large as $c(\epsilon) \log |A|$ the bound $|A| = O(|B|^{1+\epsilon})$ holds.}.

Despite the fact that Question 2 looks similar to Question 1 at a first glance, we were not able to give such a sharp bound as in Lemma \ref{lemma:sumset_multp_group}, mainly due to the lack of a suitable replacement of the Subspace theorem, used in \cite{OllyMe}. To convince the reader that the question is probably subtle, let us take $B = [n]$, the integers from $1$ to $n$, and let $A = B.B$.  Then any non-trivial answer to Question 2, i.e. even the bound $|A| = o(|B|^2)$, already implies Erd\H{o}s' Multiplication Table theorem, as explained below.

This observation indicates that perhaps the most natural setting to start with is the integer case.  The main part of this note is to prove the following theorem with the following technical condition.
\begin{cond}[Polynomial growth] \label{cond:polygrowth}
	Let $C_1, C_2 > 0$ be fixed absolute constants. A set of integers $B$ is of \emph{polynomial growth} if for any $b \in B$ holds
	$$
		|b| < 	C_1|B|^{C_2}.
	$$
\end{cond}

\begin{theorem} \label{thm:smalldoubling}
	Let $K > 0$ be fixed and $B$ be a set of integers of polynomial growth. If $A \subset B.B$ and $|A+A| < K|A|$ then $|A| = o(|B|^2)$\footnote{To be precise, here we assume that we have a sequence of sets $B_k$ of polynomials growth such that $|B_k| \to \infty$. Then Theorem \ref{thm:smalldoubling} says that $|A|/|B_k|^2 \to 0$. Later on we will always silently assume that the set $B$ is "large" in the aforementioned sense. See also the notation section.}.
\end{theorem}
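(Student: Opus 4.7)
The plan is to argue by contradiction, supposing that there is $\varepsilon>0$ and a sequence of sets $B$ of polynomial growth with $|B|\to\infty$ admitting subsets $A\subset B\cdot B$ with $|A+A|<K|A|$ and $|A|\geq\varepsilon|B|^2$. I would play the additive (Freiman-type) structure of $A$ against the multiplicative (product-set) structure inherited from $B$, in the spirit of Erd\H{o}s' multiplication-table theorem.

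My first step is to apply Freiman's theorem to $A$: the hypothesis $|A+A|<K|A|$ forces $A\subset P$, where $P=a_0+\{x_1v_1+\cdots+x_dv_d:0\leq x_i<L_i\}$ is a proper generalized arithmetic progression of dimension $d=d(K)$ and size $|P|\leq C(K)|A|$. Since every $a\in A$ factors as $b_1b_2$ with $b_i\in B$, polynomial growth gives $|a|\leq C_1^2|B|^{2C_2}$, which in particular bounds the number of distinct prime divisors of any $a\in A$ by $O(\log|B|)$.

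The technical core should be a counting lemma for prime divisors in a GAP. Sorting the side lengths so that $L_1\geq\cdots\geq L_d$, and fixing any prime $p$ with $p\nmid v_1$, a congruence-class count in the longest direction gives
\[
  |P\cap p\mathbb{Z}|\leq\frac{|P|}{p}+\frac{|P|}{L_1}.
\]
Since $L_1\geq|P|^{1/d}$ (using properness), this is essentially $|P|/p$ in the useful window $p\ll L_1$. On the other hand, $p\mid a=b_1b_2$ forces $p\mid b_1$ or $p\mid b_2$, so
\[
  |A\cap p\mathbb{Z}|\leq 2|B|\cdot N_p(B),
\]
where $N_p(B)=|\{b\in B:p\mid b\}|\leq\min(|B|,\,C_1|B|^{C_2}/p)$ by polynomial growth. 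Summing these bounds over primes $p$ in a carefully chosen dyadic range $[T,2T]$ and comparing with the double count $\sum_{p\in[T,2T]}|A\cap p\mathbb{Z}|=\sum_{a\in A}\#\{p\in[T,2T]:p\mid a\}$ should force the ratio $|A|/|B|^2$ down to zero, using that typical elements of $B\cdot B$ carry a predictable number of medium-sized prime divisors.

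The obstacle I most worry about is that polynomial growth does not force elements of $B$ to have many prime factors: if $B$ consists of primes, each $a\in B\cdot B$ has at most two prime factors and the divisor-counting argument collapses. The proof must therefore treat two regimes — when the typical element of $B$ has few or many prime divisors — and glue them together, presumably by iteratively refining $B$ to a subset where the multiplicative structure is generic enough to invoke the Erd\H{o}s heuristic, while still preserving a constant proportion of the assumed density $|A|/|B|^2\geq\varepsilon$. Polynomial growth is essential here because it caps the number of relevant primes at $O(\log|B|)$, preventing pathological accumulation of prime factors at a single scale and keeping the pigeonhole step quantitatively sharp.
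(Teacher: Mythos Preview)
Your overall strategy---Freiman on $A$, then divisor statistics in the style of Erd\H{o}s---is the right one, and your worry at the end is exactly the gap. You apply Freiman only to $A$, and then try to estimate $|A\cap p\mathbb{Z}|$ from the $B$-side via $|A\cap p\mathbb{Z}|\leq 2|B|\cdot N_p(B)$. But as you yourself note, this says nothing useful when elements of $B$ have very few prime factors (e.g.\ $B$ a set of primes): the divisor-counting heuristic relies on knowing that a \emph{typical} $b\in B$ already has about $\log\log n$ prime-power divisors, and nothing in your setup gives that. Your proposed ``two regimes'' fix is not a repair one can carry out with the tools you have assembled, because polynomial growth alone imposes no lower bound on $\Omega(b)$.

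The missing idea, which the paper supplies, is to transfer the additive structure from $A$ back to $B$. Concretely: the hypothesis $|A|\gg|B|^2$, $|A+A|\ll|A|$ makes the containment graph on $(B,B)$ dense, and a Szemer\'edi--Trotter argument (lines $y=(b_i+b_j)x+a_k$, points $B\times(3A)$) shows that many pairs in $B$ with large common neighbourhood force a restricted sumset of $B$ to be small. A Balog--Szemer\'edi--Gowers step then produces dense subsets $V,W\subset B$ with $|V+V|,|W+W|\ll|B|$ such that the containment graph restricted to $(V,W)$ is still dense. Now Freiman applies to $V$ and $W$ as well, yielding proper GAPs $P_1,P_2$ of size $\asymp|B|$ with a dense product graph into a GAP $P_3\supset A$. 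At this point the Erd\H{o}s divisor-statistics lemma applies to all three GAPs: almost every $x\in P_1$ and $y\in P_2$ has $\Omega_{P,n^{\delta/2}}(\cdot)\approx\log\log n$, so for $\Omega(n^2)$ pairs $(x,y)$ one has $\Omega_{P,n^\delta}(xy)\gtrsim 2\log\log n$, while almost every element of $P_3$ has $\Omega_{P,n^\delta}\approx\log\log n$. Since a positive proportion of such products $xy$ land in $P_3$, this is the contradiction. In short, the step you are missing is not a refinement of the counting but the structural fact that $B$ itself must contain dense small-doubling pieces; once you have that, the case $B=\{\text{primes}\}$ is ruled out automatically, because primes cannot be dense in a size-$n$ GAP whose typical element has $\sim\log\log n$ prime factors.
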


Unfortunately, at the moment we don't know how to get rid of Condition \ref{cond:polygrowth} nor we can extend Theorem \ref{thm:smalldoubling} to sets of real numbers. Nevertheless, the Multiplication Table theorem follows as a corollary unconditionally.
\begin{corr}[Erd\H{o}s, \cite{ErdMult}]
Let $B=[n]$. Then $|B.B| = o(n^2)$.
\end{corr}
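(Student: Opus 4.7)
The plan is to derive the Multiplication Table theorem as a straightforward deduction from Theorem \ref{thm:smalldoubling} by a contradiction argument, exploiting the fact that the product set $[n].[n]$ automatically sits inside a short interval.

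First I would set $B_k = [k]$ and observe that $B_k$ trivially satisfies Condition \ref{cond:polygrowth} with $C_1 = C_2 = 1$, since every $b \in B_k$ satisfies $|b| \leq k = |B_k|$. So the hypotheses of Theorem \ref{thm:smalldoubling} on the ambient set are in place for every $k$.

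Next, I would argue by contradiction. Suppose the conclusion $|B_k.B_k| = o(k^2)$ fails. Then there is some $\delta > 0$ and a subsequence (which I will relabel as $k$) along which $|B_k.B_k| \geq \delta k^2$. Set $A_k = B_k.B_k$. The crucial observation is that $A_k \subseteq [1, k^2]$, hence $A_k + A_k \subseteq [2, 2k^2]$ and consequently
\[
  |A_k + A_k| \leq 2k^2 \leq \frac{2}{\delta}\,|A_k|.
\]
Thus, with the fixed constant $K := 2/\delta$, the sets $A_k \subseteq B_k.B_k$ satisfy $|A_k + A_k| < K|A_k|$ for all $k$ in the subsequence.

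Now I apply Theorem \ref{thm:smalldoubling} to the sequence $B_k$ (of polynomial growth) and the subsets $A_k$ (with fixed doubling constant $K$). The theorem yields $|A_k|/|B_k|^2 \to 0$, i.e.\ $|A_k| = o(k^2)$, directly contradicting the lower bound $|A_k| \geq \delta k^2$ along the subsequence. This contradiction forces $|B_k.B_k| = o(k^2)$, which is precisely Erd\H{o}s' Multiplication Table theorem. There is no real obstacle here beyond correctly identifying the trivial doubling bound $|A+A| \leq 2k^2$ that makes $A = B.B$ itself a legitimate instance for Theorem \ref{thm:smalldoubling}; the work is entirely absorbed into the theorem.
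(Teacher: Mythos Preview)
Your argument is correct and matches the paper's own proof essentially line for line: assume for contradiction that $|B.B| \geq \delta n^2$, take $A = B.B \subset [n^2]$ so that $|A+A| \leq 2n^2 \leq (2/\delta)|A|$, and invoke Theorem~\ref{thm:smalldoubling} for a contradiction. Your added care with the subsequence formulation is just a spelled-out version of the footnote attached to Theorem~\ref{thm:smalldoubling}.
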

\begin{proof}
Indeed, assume for contradiction that $|B.B| \geq cn^2$ for some absolute $c > 0$. Take $A = B.B$, hence $A + A \subset [2n^2]$, and we have
$$
\frac{|A+A|}{|A|} < \frac{2}{c}.
$$  
But this contradicts Theorem \ref{thm:smalldoubling}, since clearly $B$ is of polynomial growth.
\end{proof} 
 
Recall that the \emph{additive energy} $E_+(A)$ of a set $A$ is defined as the number of quadruples $(a_1, a_2, a_3, a_4)$ such that 
$a_1 + a_2 = a_3 + a_4$. We will often also say that $A$ has small doubling if $|A+A| \ll |A|$. The additive energy of sets of size $n$  with small doubling is of order $n^3$, but the opposite is not true. However, the Balog-Szemer\'edi-Gowers theorem (see \cite{TV}) states that if $A$ is of size $n$ and the additive energy $E_{+}(A)$ is of order $n^3$, one can find a subset $A' \subseteq A$ with $|A'| \gg n$ such that $A'$ has small doubling. The multiplicative energy $E_{\times}(A)$ is defined in a similar way as the number of 4-tuples $(a_1, a_2, a_3, a_4) \in A^4$ such that $a_1a_2 = a_3a_4$   .

Thus, the Balog-Szemer\'edi-Gowers theorem provides an immediate corollary of Theorem \ref{thm:smalldoubling} (which is in fact equivalent to it).

\begin{corr}
	For an integer set of polynomial growth $B$ holds
	$$
		E_{+}(B.B)	= o(|B|^6).
	$$
\end{corr}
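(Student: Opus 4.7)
The plan is to deduce the corollary from Theorem \ref{thm:smalldoubling} by contradiction, using the Balog--Szemer\'edi--Gowers theorem as the bridge between having large additive energy and containing a large subset of small doubling.

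Suppose, toward contradiction, that $E_+(B.B)$ is \emph{not} $o(|B|^6)$; then, passing to a subsequence of the sets $B_k$ in the sense indicated in the footnote of Theorem \ref{thm:smalldoubling}, we may assume $E_+(B.B) \geq \delta |B|^6$ for some fixed $\delta > 0$. Write $A := B.B$. The trivial bound $E_+(A) \leq |A|^3$ already forces $|A| \geq \delta^{1/3}|B|^2$, so $A$ has size comparable to its a priori maximum $|B|^2$. Moreover $E_+(A) \geq \delta|B|^6 \geq \delta|A|^3$, i.e.\ $A$ has additive energy of maximal order.

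I would then apply the Balog--Szemer\'edi--Gowers theorem with parameter depending on $\delta$: this produces a subset $A' \subseteq A$ satisfying $|A'| \geq c_1(\delta)|A|$ and $|A' + A'| \leq c_2(\delta)|A'|$, where $c_1(\delta), c_2(\delta) > 0$ depend only on $\delta$. Combining with the previous lower bound on $|A|$ yields $|A'| \geq c_1(\delta)\,\delta^{1/3}|B|^2$, i.e.\ $|A'| \gg |B|^2$ with a constant depending only on $\delta$.

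Now $A' \subseteq B.B$, $|A' + A'| \leq K|A'|$ for the fixed constant $K := c_2(\delta)$, and $B$ satisfies Condition \ref{cond:polygrowth}. Hence Theorem \ref{thm:smalldoubling} applies to $A'$ and gives $|A'| = o(|B|^2)$, which directly contradicts $|A'| \gg |B|^2$ just obtained. The corollary follows.

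There is no real obstacle here: this is a one-step reduction to Theorem \ref{thm:smalldoubling}. The only point one has to keep an eye on is that the BSG output constants depend only on $\delta$, so that the doubling constant $K$ produced is genuinely \emph{fixed}, which is what Theorem \ref{thm:smalldoubling} requires. The reverse implication (that the corollary formally implies Theorem \ref{thm:smalldoubling}) is the assertion in the paper that the two are equivalent, and it follows immediately from the bound $E_+(A) \geq |A+A|^{-1}|A|^3$ together with $|A| \leq |B|^2$.
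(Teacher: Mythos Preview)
Your argument is correct and is exactly the route the paper intends: the paper itself does not spell out a proof, merely stating that the corollary follows from Theorem~\ref{thm:smalldoubling} via the Balog--Szemer\'edi--Gowers theorem, and your write-up fleshes this out correctly.

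One small slip in your final parenthetical remark on the reverse implication: the Cauchy--Schwarz bound is $E_+(A) \geq |A|^4/|A+A|$, not $|A|^3/|A+A|$; with the correct exponent the reverse direction goes through (from $|A+A|<K|A|$ and $|A|\gg|B|^2$ one gets $E_+(B.B)\geq E_+(A)\geq |A|^3/K\gg|B|^6$), whereas the bound as you wrote it would only give $E_+(B.B)\gg|B|^4$, which is not enough. This does not affect the proof of the corollary itself.
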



 
The rest of the paper is devoted to the proof of Theorem \ref{thm:smalldoubling}. The proof is rather lengthy and will consist of several steps. We start with the contrapositive assumption that there is a set $A$ with small doubling and $\Omega(|B|^2)$ pairs $(b_1, b_2)$ s.t. $b_1b_2 \in A$. Refining the structural information about $A$ and $B$ we will eventually conclude that one can replace $A$ and $B$ with generalized arithmetic progressions with certain properties. The final step is to rule out this possibility and the argument is more in the spirit of Erd\H{o}s' original proof of the multiplication table theorem (which now follows as a special case). This is the only place where the polynomial growth condition is used and we strongly believe that Theorem \ref{thm:smalldoubling} in fact holds unconditionally and also for arbitrary sets of complex numbers (since the rest of the arguments go through). 
 

%

\section{Notation and Definitions}

Let $f,g : \mathbb{N} \rightarrow \mathbb{R}_{+}$. 
The following standard notation will be used in this paper: 
\begin{enumerate}

\item $f(n) = O(g(n))$ means that $\limsup_{n \rightarrow \infty} \frac{f(n)}{g(n)} < \infty$. It is equivalent to $f(n) \ll g(n)$ or $g(n) \gg f(n)$.
\item $f(n) = \Omega(g(n))$ means that $g(n) = O(f(n))$.
\item $f(n) = o(g(n))$ means that $\lim_{n \rightarrow \infty} \frac{f(n)}{g(n)} = 0$. 
\item $f(n) \lll g(n)$ means $f(n) = o(g(n))$ and $f(n) \ggg g(n)$ is equivalent to $g(n) \lll f(n)$.

\end{enumerate}

The number $n$ is always assumed to be large being the size of our set $B$. It is helpful to define a graph which represents the way $A$ is contained in $B.B$.  

\begin{defi}
   Let $A \subset B.B$. The \emph{containment graph with multiplicities} $G^m(A, B.B)$ is a bipartite graph on $(B, B)$ s.t. $(b_1, b_2)$ are adjacent iff $b_1 \leq b_2$ and $b_1b_2 \in A$. Let us leave out some edges of $G^m$ such that for each $a \in A$ there remains only the first pair  $(b_1, b_2)$ with $b_1b_2 = a$, in lexicographical order\footnote{Of course, the actual ordering does not matter.}. We will call the remaining graph $G(A, B.B)$ the \emph{containment graph}.
\end{defi}

The notion of the containment graph (with multiplicities) is similar to the unit distance graph usually defined for the Erd\H{o}s unit distance problem. We will often write $G^m$ or $G$ when it is clear which sets $A$ and $B$ are meant. 

Let us now record our assumptions about $A$ and $B$. Let $K > 0, \alpha > 0$ be fixed constants and $|B| = n$. In what follows we will assume for contradiction that $|A+A| < K|A|$ and $E(G_m) \geq \alpha n^2$. Every time we use the $\ll$ or $\gg$ notation we silently assume that the implicit constant depends on $K$ and $\alpha$ only. All explicit constants may depend on $K$ and $\alpha$ as well. Also, we say that a set $X$ is \emph{dense} in $Y$ if 
$|Y \cap X| \gg |Y|$. A graph $G$ is said to be \emph{dense} if $|E(G)| \gg |V(G)|^2$. A set $U$ is said to have \emph{small doubling} if $|U + U| \ll |U|$. Again, we assume that the sets in question are large while the implicit constant does not depend on the sizes of the sets.

In due course we will update $A$ and $B$ which may worsen  $K$ and $\alpha$ only by a constant factor, so we will use the same letters, slightly abusing notation.

Sometimes instead of taking the sumset $U+W$ we will take a restricted sumset $U \stackrel{G'}{+} W$ along a bipartite graph $G'$ with the color classes $(U, W)$. It is defined as
$$
	U \stackrel{G'}{+} W = \{ u+v  \,|\, (u, v) \in E(G')\}.
$$
A restricted partial product set along a graph is defined similarly. We will also use repeated sumsets $\ell A = A + \ldots + A$, where $A$ is taken $\ell$ times.

A \emph{generalized arithmetic progression} (or GAP) is a set of the form
$$
	P = \{a + d_1x_1 + \dots d_kx_k : 0 \leq x_i \leq L_i \},
$$
where $k$ is the \emph{rank}, $Vol(P) = \prod_{i=1}^k L_i$ is the \emph{volume} spanned by the $k$-dimensional box and $d_1, \ldots, d_k$ are the \emph{differences}.  A GAP is \emph{proper} if all its elements are distinct, i.e. $|P| = \prod_{i=1}^k (L_i + 1)$.
 
The following, now classical, theorem due to Freiman and Ruzsa (see e.g. \cite{TV}, which is a standard textbook reference in additive combinatorics) is crucial for our future arguments and asserts that sets with small doubling have very rigid structure.

\begin{theorem}[Freiman-Ruzsa inverse theorem] \label{thm:Freiman-Ruzsa}
	Let $K>0$ be fixed. If $|A+A| < K|A|$, then $A$ is contained in a proper GAP of rank at most $K$ and size at most $c(K)|A|$.
\end{theorem}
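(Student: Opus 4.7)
The plan is to adapt Ruzsa's proof of Freiman's theorem. Under the hypothesis $|A+A| \leq K|A|$, the first step is to invoke the Plünnecke--Ruzsa inequalities, which give $|mA - nA| \leq K^{m+n}|A|$ for all $m,n \geq 0$. These follow by iterating the Ruzsa triangle inequality and provide the control over iterated sum/difference sets needed throughout the proof.

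Next I would pass to a modular model. Choose a prime $N$ somewhat larger than $|8A - 8A|$ and consider a random dilate $\xi \cdot A \bmod N$. A pigeonhole argument of Ruzsa shows that with positive probability this dilate is Freiman $8$-isomorphic to a set $A' \subset \mathbb{Z}/N\mathbb{Z}$ of density $\delta \gg_K 1$. Working in a finite cyclic group gives access to Fourier analysis while the $8$-isomorphism preserves all the additive structure relevant to our statement.

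The heart of the argument is Bogolyubov's lemma. Let $S \subset \widehat{\mathbb{Z}/N\mathbb{Z}}$ denote the large spectrum of $\mathbf{1}_{A'}$ at threshold $\delta^{3/2}$; by Parseval, $|S| \leq \delta^{-2}$. A short Fourier computation shows that $2A' - 2A'$ contains the Bohr set $B(S,1/4) = \{ x : \|\xi x / N\| \leq 1/4 \text{ for all } \xi \in S \}$. To realize a proper GAP inside this Bohr set I would apply Minkowski's second theorem to the integer lattice naturally attached to the characters in $S$; the successive minima furnish a basis producing a proper generalized arithmetic progression of rank $|S| \leq \delta^{-2}$, of volume comparable to that of the Bohr set. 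Pulling back through the Freiman isomorphism yields a proper GAP $P \subset 4A - 4A$ with $|P| \gg_K |A|$ and rank $O_K(1)$.

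The finishing step is the Ruzsa covering lemma: since $|A + P| \leq |5A - 4A| \leq K^{O(1)}|A|$ while $|P| \gg_K |A|$, the set $A$ is covered by $O_K(1)$ translates of $P - P$. These finitely many translates can be absorbed into a single proper GAP of rank $O_K(1)$ and size $O_K(|A|)$ containing $A$, completing the proof. The main obstacle is the passage from the Bohr set to a \emph{proper} GAP of rank bounded solely in terms of $K$; this is where the geometry of numbers does real work, and obtaining sharper rank bounds (essentially linear in $K$ rather than polynomial) requires the finer covering arguments of Chang and subsequent refinements.
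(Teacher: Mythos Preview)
The paper does not prove this statement at all: Theorem~\ref{thm:Freiman-Ruzsa} is quoted as a classical black box with a reference to the textbook \cite{TV}, and the paper explicitly says that since $K$ is fixed, the dependence of $c$ on $K$ is irrelevant for its purposes. So there is no ``paper's own proof'' to compare against.

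Your outline is the standard Ruzsa proof as presented in \cite{TV}: Pl\"unnecke--Ruzsa to control iterated sumsets, Ruzsa's modelling lemma to pass to $\mathbb{Z}/N\mathbb{Z}$, Bogolyubov to locate a Bohr set in $2A'-2A'$, geometry of numbers (Minkowski's second theorem) to extract a proper GAP from the Bohr set, and finally the Ruzsa covering lemma to cover $A$. The sketch is correct at the level of a plan. One small caveat: the theorem as stated in the paper says ``rank at most $K$,'' which your argument does not literally deliver --- Bogolyubov plus Parseval gives rank $\leq \delta^{-2}$, i.e.\ polynomial in $K$, and you correctly flag that reaching rank $O(K)$ needs Chang's lemma or later refinements. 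Since the paper only uses that the rank is $O_K(1)$, this discrepancy is immaterial here.
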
  

The problem of estimating the bounds in Theorem \ref{thm:Freiman-Ruzsa} has received considerable attention. However, since we assume that $K$ is constant and fixed once and for all, the dependence of $c$ on $K$ is not important for us. 

Another standard result is the Pl\"unnecke-Ruzsa inequality. Again we refer the reader to the book \cite{TV} and a more recent and elegant proof due to Petridis \cite{Petr}.

\begin{theorem}[Pl\"unnecke-Ruzsa inequality] \label{thm:Plunecke-Ruzsa}
	Let $A$ be an additive set  with $|A + A| ≤ K |A|$. Then for all nonnegative integers $m, n$ we have
$$
|mA - nA| \leq K^{m+n}|A|
$$
\end{theorem}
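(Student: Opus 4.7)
The plan is to follow Petridis's streamlined proof, which hinges on three ingredients. The initial move is to pass from $A$ to a nonempty $A' \subseteq A$ that minimizes the ratio $K' := |A'+A|/|A'|$; by hypothesis $K' \leq K$, and the minimality automatically implies that every nonempty $S \subseteq A'$ satisfies $|S+A| \geq K'|S|$.

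The technical heart of the argument is \emph{Petridis's lemma}: for every finite set $C$,
$$|A'+A+C| \leq K'\,|A'+C|.$$
I would prove this by induction on $|C|$. The case $|C|=1$ is the definition of $K'$. For the inductive step, write $C = C_0 \cup \{x\}$ with $x \notin C_0$, and let $S := \{a \in A' : a + A + x \subseteq A'+A+C_0\}$. The new contributions $(A'+A+x)\setminus(A'+A+C_0)$ are then contained in $((A'\setminus S)+A+x)\setminus(S+A+x)$, whose size is $|A'+A| - |S+A| \leq K'|A'| - K'|S| = K'\,|A'\setminus S|$ by the minimality remark. A short set-theoretic check gives the inclusion $A'\setminus S \subseteq \{a \in A' : a+x \notin A'+C_0\}$, and the latter set has cardinality exactly $|A'+C| - |A'+C_0|$. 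Combining with the inductive hypothesis $|A'+A+C_0| \leq K'|A'+C_0|$ closes the induction.

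Iterating the lemma with $C = A, 2A, \dots, (n-1)A$ yields $|A' + nA| \leq (K')^n|A'| \leq K^n|A|$ for all $n \geq 0$. The final ingredient is Ruzsa's triangle inequality in the form
$$|X|\,|Y-Z| \leq |X+Y|\,|X+Z|,$$
proved by the injection $X \times (Y-Z) \hookrightarrow (X+Y) \times (X+Z)$, $(x,t) \mapsto (x+y_t,\,x+z_t)$, where $y_t \in Y$, $z_t \in Z$ are fixed representatives with $y_t - z_t = t$. Taking $X = A'$, $Y = mA$, $Z = nA$ and inserting the iterated bound gives
$$|A'|\,|mA-nA| \leq |A'+mA|\,|A'+nA| \leq K^{m+n}\,|A'|^2,$$
whence $|mA - nA| \leq K^{m+n}|A|$ as required.

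The only genuinely delicate point is the inductive step in Petridis's lemma — identifying the subset $S$ whose translates are already absorbed into $A'+A+C_0$, and converting the minimality of $K'$ into the crucial upper bound $|A'+A| - |S+A| \leq K'|A'\setminus S|$. Once this combinatorial accounting is in place, the iteration is automatic and the transformation from sums to differences via Ruzsa's triangle inequality is a one-line injective map.
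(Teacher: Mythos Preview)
Your argument is correct and follows Petridis's proof, which is precisely the reference the paper cites. Note, however, that the paper does not actually supply a proof of this theorem at all: it is quoted as a standard tool, with the reader directed to \cite{TV} and \cite{Petr}. So there is no in-paper proof to compare against; you have simply filled in what the paper leaves to the literature, and done so along the lines of the cited source.

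One small wording issue: you say the new contributions are contained in $((A'\setminus S)+A+x)\setminus(S+A+x)$, ``whose size is $|A'+A|-|S+A|$''. Strictly speaking that set need not have exactly this cardinality; what is true is that the new contributions lie in $(A'+A+x)\setminus(S+A+x)$, and \emph{that} set has size exactly $|A'+A|-|S+A|$. The bound you then derive, $K'|A'|-K'|S|=K'|A'\setminus S|$, is correct either way, so this is cosmetic rather than a gap.
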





\section{Eliminating multiplicities}

In this section we want to show that if $G^m$ is dense then the graph $G$ must be also dense, so that one can assume that every element in $A$ is counted only once.

To analyze $G$ we will use a well-known result about dense bipartite graphs. Write $N_G(v)$ for the set of neighbours of a vertex $v$ in a graph $G$ (the subindex $G$ will be often omitted). The following fact about dense bipartite graphs  was  probably used for the first time by Gowers \cite{Gow} to improve bounds in the Balog-Szemer\'edi theorem.

\begin{lemma} \label{lm:pairs}
Let $G$ be a bipartite graph on  $(B_1, B_2)$ where $|B_1| = |B_2| = n$ and $0 < \alpha = E(B_1, B_2)/n^2$. Let $0 < \epsilon < 1$ be fixed. Then there is a subset $B'_1 \subseteq B_1$ with $|B'_1| \geq \alpha n/2$ such that for at least $(1-\epsilon)|B'_1|^2$ of the ordered pairs of vertices $(v_1, v_2) \in B'_1 \times B'_1$ holds 
$$
|N(v_1) \cap N(v_2)| \geq \frac{\epsilon \alpha^2 n}{2}.
$$
\end{lemma}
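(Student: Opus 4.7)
The plan is to use a dependent-random-choice argument: I would sample a vertex $w \in B_2$ uniformly at random and take $B_1' = B_1'(w) := N_G(w) \subseteq B_1$. Call a pair $(v_1,v_2) \in B_1 \times B_1$ \emph{bad} if $|N(v_1) \cap N(v_2)| < t := \epsilon\alpha^2 n/2$, and let $Y = Y(w)$ denote the number of bad pairs contained in $B_1'(w) \times B_1'(w)$. The goal is to show that with positive probability over $w$, the random set $B_1'$ satisfies both $|B_1'| \geq \alpha n/2$ and $Y \leq \epsilon |B_1'|^2$ simultaneously, which is exactly the conclusion.

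Three elementary expectation calculations will drive the argument. First, $\EXP[|B_1'|] = \alpha n$ since the expected number of neighbours of a random $w$ equals the average degree on the $B_2$ side. Second, writing $|B_1'|^2 = \sum_{v_1,v_2} \mathbf{1}[w \in N(v_1) \cap N(v_2)]$ and applying Cauchy--Schwarz to $\sum_w |N(w)| = \alpha n^2$ gives
\[
\EXP[|B_1'|^2] \;=\; \frac{1}{n} \sum_{w \in B_2} |N(w)|^2 \;\geq\; \alpha^2 n^2.
\]
Third, each bad pair $(v_1,v_2)$ lies in $B_1'(w)^2$ with probability $|N(v_1)\cap N(v_2)|/n < t/n$, so $\EXP[Y] < n^2 \cdot t/n = \epsilon\alpha^2 n^2/2$.

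The key step, and the only nontrivial one, will be to combine ``large $|B_1'|$'' and ``few bad pairs'' in a single averaging inequality. The clean way is to truncate the small-$|B_1'|$ contribution using $|B_1'|^2 \leq (\alpha n/2)|B_1'|$ on the event $\{|B_1'| < \alpha n/2\}$; together with the first-moment bound this yields $\EXP[|B_1'|^2 \mathbf{1}_{|B_1'|<\alpha n/2}] \leq \alpha^2 n^2/2$, and hence
\[
\EXP\bigl[(|B_1'|^2 - \epsilon^{-1} Y)\,\mathbf{1}_{|B_1'|\geq \alpha n/2}\bigr] \;\geq\; \frac{\alpha^2 n^2}{2} - \epsilon^{-1}\cdot \frac{\epsilon\alpha^2 n^2}{2} \;=\; 0.
\]
Choosing any $w$ for which the expression inside the expectation is nonnegative produces a set $B_1'$ with $|B_1'| \geq \alpha n/2$ and $Y \leq \epsilon |B_1'|^2$, as desired.

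I do not foresee any genuine obstacle: the whole argument is essentially dependent random choice. The only place requiring a bit of care is the truncation trick in the last display, which replaces a Chebyshev-style deviation estimate with a linear combination of the three expectation bounds and is what makes the two desired properties of $B_1'$ coexist in a single $w$.
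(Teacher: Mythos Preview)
Your argument is correct and is precisely the dependent-random-choice proof that Gowers used; the paper does not supply its own proof of this lemma but merely cites Gowers, so there is nothing further to compare. One tiny wording issue: in the final display your lower bound is only $\geq 0$, and a $w$ with $|B_1'(w)|<\alpha n/2$ makes the integrand equal to $0$ trivially, so ``choose $w$ making the integrand nonnegative'' does not by itself force the indicator to be $1$. This is easily repaired: either note that the bound $\EXP[Y]<\epsilon\alpha^2 n^2/2$ is strict, giving a strictly positive expectation, or observe separately that $\EXP[|B_1'|^2 \mathbf{1}_{|B_1'|\ge \alpha n/2}]\ge \alpha^2 n^2/2>0$ guarantees the event $\{|B_1'|\ge \alpha n/2\}$ is nonempty and then average conditionally on it.
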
 

\begin{lemma} \label{lm:largeA}
	If $G^m$ is dense then so is $G$.
\end{lemma}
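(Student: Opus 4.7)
Let $r(a) = |\{(b_1, b_2) \in B^2 : b_1 \leq b_2,\, b_1 b_2 = a\}|$, so that $|E(G^m)| = \sum_{a \in A} r(a) \geq \alpha n^2$ while $|E(G)| = |A|$. The goal is to deduce $|A| \gg n^2$.

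My first move will be Cauchy--Schwarz,
\[
(\alpha n^2)^2 \;\leq\; \Big(\sum_{a \in A} r(a)\Big)^2 \;\leq\; |A| \cdot \sum_{a \in A} r(a)^2 ,
\]
which reduces the claim to the upper bound
\[
\sum_{a \in A} r(a)^2 \;=\; \#\{(b_1,b_2,b_3,b_4) \in B^4 : b_1 b_2 = b_3 b_4 \in A\} \;\ll\; n^2
\]
on the restricted multiplicative energy of $B$ through $A$.

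To bound this energy I plan to use Lemma \ref{lm:pairs}. Applied to (the symmetrization of) $G^m$, it will produce a subset $B' \subseteq B$ with $|B'| \gg n$ such that $(1-\epsilon)|B'|^2$ pairs $(v_1, v_2) \in B' \times B'$ have a common $G^m$-neighborhood of size $\gg n$. This furnishes $\gg n^3$ ``good'' triples $(v_1, v_2, w)$ with $v_1 w, v_2 w \in A$; each such triple supplies the element $(v_1 - v_2) w = (v_1 w) - (v_2 w) \in A - A$ (and dually $(v_1 + v_2) w \in A + A$). By Pl\"unnecke--Ruzsa (Theorem \ref{thm:Plunecke-Ruzsa}), $|A \pm A| \leq K^2 |A|$. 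Double counting the triples against $A - A$ --- and then bounding the per-target multiplicity via an auxiliary Cauchy--Schwarz step and the additive structure of $A$ --- should yield the desired bound $\sum_a r(a)^2 \ll n^2$.

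The main obstacle will be carrying out this multiplicity bound without losing a $\log n$ or $n^{o(1)}$ factor: the naive per-target multiplicity bound for $d \in A - A$ depends on the divisor function $\tau(d)$, which is not available at this stage since the polynomial growth of $B$ is reserved for the final section of the paper. I expect that one must instead extract the required sharp control by iterating Pl\"unnecke--Ruzsa on higher-order sumsets $kA - \ell A$ and combining these with a Balog--Szemer\'edi--Gowers-type symmetrization --- essentially a ``multiplicative BSG relative to an additively structured host set'' --- and this is where the main technical work of the argument should be concentrated.
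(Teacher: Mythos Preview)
Your Cauchy--Schwarz reduction to $\sum_{a\in A} r(a)^2 \ll n^2$ is valid, but from that point on the proposal is only a sketch with an openly acknowledged gap: you never actually carry out the ``multiplicity bound,'' and the device you propose --- counting triples $(v_1,v_2,w)$ with $v_1w,v_2w\in A$ against $A-A$ --- controls $\sum_{w} |N(w)|^2$, not $\sum_a r(a)^2$; these are different quantities and it is not clear how bounding one yields the other. The appeal to an iterated Pl\"unnecke--Ruzsa / ``multiplicative BSG relative to an additively structured host'' is not an argument, and I do not see how to turn it into one: the per-target multiplicity of $d\in A-A$ under $(v_1-v_2)w=d$ genuinely involves divisor-type information that the hypotheses do not supply.

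The paper's proof bypasses Cauchy--Schwarz entirely and goes straight to Szemer\'edi--Trotter. After using Lemma~\ref{lm:pairs} to extract $B_1\subset B$ with $|B_1|\gg n$ in which a $(1-\epsilon)$-fraction of pairs $(b_i,b_j)$ share $\gg n$ common $G^m$-neighbours, one takes the line set $L=\{\,y=(b_i+b_j)x+a_k\,\}$ over such good pairs and $a_k\in A$, and the point set $P=B\times(A+A+A)$, which has $|P|\ll n|A|$ by Pl\"unnecke--Ruzsa. Each common neighbour $x$ of $(b_i,b_j)$ gives the incidence $(x,\,b_ix+b_jx+a_k)\in P$ because $b_ix,b_jx\in A$; hence every line in $L$ is $\gg n$-rich. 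Szemer\'edi--Trotter then yields $|L|=|B_1\stackrel{G'}{+}B_1|\cdot|A|\ll |A|^2/n+|A|$, and since the good-pair graph $G'$ is $(1-\epsilon)$-dense on $B_1$ the restricted sumset has size $\gg|B_1|\gg n$, whence $n^2\ll |A|$ directly. The idea you are missing is precisely this incidence step: the identity $(b_i+b_j)x=b_ix+b_jx$ converts the small doubling of $A$ into a point--line configuration, and no divisor or BSG-type argument is needed.
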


\begin{proof}
	Let $G^m$ be the graph defined above and $\alpha = |E(G^m)|/n^2$. Fix $\epsilon > 0$ to be defined later. By Lemma \ref{lm:pairs} there is a set $B_1$ with $|B_1| \geq \alpha n/2$ such that a proportion $(1-\epsilon)$ of the pairs of vertices in $B_1$ share at least $\epsilon \alpha^2 n/2$ neighbours. Let $G' \subseteq B_1 \times B_1$ be the set of such pairs sharing a large number of common neighbors. 
	
	Now we are going to define a set of lines in the plane and then apply the Szemer\'edi-Trotter incidence theorem (another standard tool in additive combinatorics, see \cite{TV}). Define the set of lines 
	$L = \{l_{ijk}: y = (b_i + b_j)x + a_k | (b_i, b_j) \in G', a_k \in A \}$ and points $P = B \times (A + A + A)$ so $|P| \ll n|A|$ by the Pl\"unnecke-Ruzsa inequality.  Each line $l_{ijk}$ is incident to at least $\epsilon \alpha^2 n/2$ points in $P$ with $x$-coordinates in $X_{ij} = N(b_i) \cap N(b_j)$, by our choice of $G'$.
	
	Applying the Szemer\'edi-Trotter theorem, we have
 \beq \label{eq:Sz-Tr}
	 |L| = |B_1 \stackrel{G'}{+} B_1||A| \ll \frac{n^2|A|^2}{\epsilon^3 n^3} + \frac{|A|}{\epsilon}.
 \eeq
	Using the trivial bound $|B_1 \stackrel{G'}{+} B_1| \geq (1-\epsilon)|B_1|$ with, say, $\epsilon = 1/4$ we obtain
	$$
	   n^2 \ll n|B_1| \ll |A|.
	$$
\end{proof}

\begin{corollary} \label{corr:pairs_imply_small_doubling}
	Let $V \subset B$ with $|V| \gg |B| = n$. Let $\epsilon > 0$ be fixed and $G'$ be the set of pairs $(v_1, v_2) \in V \times V$ s.t. 
	$$
		|N(v_1) \cap N(v_2)| \gg_\epsilon n.
	$$ 
	Then $|V \stackrel{G'}{+} V| \ll_\epsilon n$. 
\end{corollary}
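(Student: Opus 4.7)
The plan is to mirror the incidence-geometric argument used in the proof of Lemma \ref{lm:largeA}, with $V$ playing the role of $B_1$ and the given $G'$ playing the role of the rich-pair graph. Since every pair $(v_1,v_2)\in G'$ already comes with at least $\gg_{\epsilon} n$ common neighbours in $B$, the conclusion should be essentially the one already extracted inside Lemma \ref{lm:largeA}, merely repackaged for an arbitrary rich-pair graph instead of the specific one produced by Lemma \ref{lm:pairs}.

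Concretely, I would introduce the family of lines
\[
L = \{ \ell_{ij,k} : y = (v_i + v_j)x + a_k \mid (v_i, v_j) \in G',\; a_k \in A \}
\]
and the point set $P = B \times (A+A+A)$. Reading the slope and $y$-intercept off a line recovers $v_i+v_j$ and $a_k$ respectively, so the parametrization is injective and $|L| = |V \stackrel{G'}{+} V| \cdot |A|$. By Theorem \ref{thm:Plunecke-Ruzsa} applied to $|A+A|\leq K|A|$, one has $|A+A+A| \ll |A|$, hence $|P| \ll n|A|$. The incidence observation carries over verbatim: for every $b \in N(v_i) \cap N(v_j)$ one has $v_i b, v_j b \in A$, whence $(v_i+v_j)b + a_k = v_i b + v_j b + a_k \in A+A+A$, which places the point $\bigl(b,(v_i+v_j)b + a_k\bigr)$ both on $\ell_{ij,k}$ and inside $P$. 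The defining property of $G'$ therefore guarantees at least $\gg_{\epsilon} n$ incidences per line.

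Plugging these estimates into the Szemer\'edi--Trotter theorem gives
\[
|L| \cdot n \;\ll_{\epsilon}\; (|L| \cdot |P|)^{2/3} + |L| + |P|,
\]
and the same simplification as in (\ref{eq:Sz-Tr}) collapses to $|V \stackrel{G'}{+} V| \ll_{\epsilon} |A|/n + 1$. Invoking the trivial bound $|A| \leq |B.B| \leq n^2$ then yields $|V \stackrel{G'}{+} V| \ll_{\epsilon} n$, as required. I do not anticipate a genuine obstacle here: the argument is essentially a second harvest of the estimate already contained in Lemma \ref{lm:largeA}. The only point deserving explicit attention is the injectivity of the line parametrization, which is immediate.
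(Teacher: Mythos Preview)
Your proposal is correct and follows exactly the approach of the paper: it reuses the line--point configuration from Lemma~\ref{lm:largeA} verbatim, applies Szemer\'edi--Trotter to obtain the analogue of (\ref{eq:Sz-Tr}), and then divides through by $|A|$ and uses the trivial bound $|A|\leq n^2$. The paper's own proof simply cites (\ref{eq:Sz-Tr}) directly, so your write-up is just a more detailed rendering of the same argument.
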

\begin{proof}
	Arguing as in Lemma \ref{lm:largeA}, we have by (\ref{eq:Sz-Tr})
	$$
	|V \stackrel{G'}{+} V| \ll_{\epsilon} \frac{|A|}{n} \ll_{\epsilon} n .
	$$
\end{proof}

From now on we will work only with the graph $G$, so that $E(G) = |A|$.

\section{Finding GAPs}

 By Corollary \ref{corr:pairs_imply_small_doubling} if we take a set $B_1$ such that a large portion of pairs of vertices in $B_1$ share a lot of neighbors, it is additively small along a dense graph. Thus, we can apply the Balog-Szemer\'edi-Gowers theorem and find a dense subset with small doubling inside $B_1$. In this section we want to achieve a bit more, namely find two dense subsets $V, W$ inside $B$ with small doubling, such that $G$ restricted to $(V, W)$ is still dense.

We need the following BSG-type result for very dense graphs, see \cite{TV}, exercise 2.5.4.
\begin{lemma} \label{lm:denseBSG}
	Let $0 < \epsilon < 1/4$ and let $G \subseteq A\times B$ with $|G| = (1 - \epsilon)|A||B|$. Then there exists $A' \subseteq A$ 
	with $|A'| = (1 -\sqrt{\epsilon})|A|$ such that
	$$
		|A' + A'| \leq \frac{|A\stackrel{G}{+}B|^2}{(1-2\sqrt{\epsilon})|A|}
	$$
\end{lemma}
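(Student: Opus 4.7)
The plan is to prune $A$ down to a large subset $A'$ all of whose vertices have almost maximal degree in $G$, and then to apply the Pl\"unnecke--Ruzsa inequality (Theorem \ref{thm:Plunecke-Ruzsa}) to the pair $(A', B)$.

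\textbf{Step 1 (Pruning via Markov).} I will use that $\sum_{a \in A}(|B| - |N_G(a)|) = \epsilon|A||B|$, so by Markov's inequality at most $\sqrt{\epsilon}|A|$ vertices $a \in A$ can satisfy $|N_G(a)| < (1-\sqrt{\epsilon})|B|$. Let $A'$ be the complement of this bad set (shrunk if necessary to have exactly $(1-\sqrt{\epsilon})|A|$ elements); every $a \in A'$ then has $|N_G(a)| \geq (1-\sqrt{\epsilon})|B|$.

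\textbf{Step 2 (Comparing $A'+B$ with $S$).} Write $S := A \stackrel{G}{+} B$. Each element of $(A'+B) \setminus S$ has the form $a+b$ with $a \in A'$ and $(a,b) \notin G$, whence
\[
|A'+B| \leq |S| + \sqrt{\epsilon}|A'||B|.
\]
Simultaneously, any two $a_1, a_2 \in A'$ satisfy
\[
|N_G(a_1) \cap N_G(a_2)| \geq (1-2\sqrt{\epsilon})|B|
\]
by inclusion--exclusion; this is the origin of the factor $(1-2\sqrt{\epsilon})$ appearing in the target bound.

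\textbf{Step 3 (Pl\"unnecke--Ruzsa).} Applying Theorem \ref{thm:Plunecke-Ruzsa} to $(A', B)$ gives $|A'+A'| \leq |A'+B|^2/|A'|$; combining with Step 2 and the identity $|A'| = (1-\sqrt{\epsilon})|A|$ will yield an estimate of the correct shape $|A'+A'| \ll |S|^2/|A|$.

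\textbf{Main obstacle.} The hard part will be extracting the precise constant $(1-2\sqrt{\epsilon})^{-1}$: the simple combination above recovers only the correct order of magnitude and is lossy in the regime where $|S|$ is dominated by the error $\sqrt{\epsilon}|A'||B|$. To obtain the sharp constant I would instead run Petridis's proof of Pl\"unnecke--Ruzsa directly inside the restricted graph $G$, exploiting that for each $(a_1, a_2) \in A' \times A'$ every common neighbour $b$ produces a pair $(a_1+b, a_2+b) \in S \times S$ realising the identity $a_1 + a_2 = (a_1+b) + (a_2+b) - 2b$, so that the $(1-2\sqrt{\epsilon})|B|$ lower bound on common neighbourhoods plugs straight into the resulting counting injection and gives the desired constant.
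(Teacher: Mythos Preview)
The paper does not give its own proof of this lemma; it simply cites Exercise~2.5.4 of Tao--Vu~\cite{TV}. So there is no in-paper argument to match, only the standard one, and your outline should be judged against that.

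Your Step~1 (Markov pruning to force $|N_G(a)|\ge(1-\sqrt\epsilon)|B|$ for all $a\in A'$) is exactly the standard opening move, and the inclusion--exclusion bound $|N(a_1)\cap N(a_2)|\ge(1-2\sqrt\epsilon)|B|$ in Step~2 is the key fact. From there, however, the Pl\"unnecke detour in Steps~2--3 is both unnecessary and misstated: the inequality $|A'+A'|\le|A'+B|^2/|A'|$ you invoke is not a form of Pl\"unnecke--Ruzsa and is false in general (take $A'=\{1,\dots,n\}$, $B=\{0\}$); the correct Ruzsa-type bound has $|B|$ in the denominator, and even then it cannot recover the sharp constant, as you note.

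The intended argument is the direct count you allude to at the end, but with the \emph{difference} identity rather than the sum identity you wrote. For each $d\in A'-A'$ fix one representation $d=a_1-a_2$; then every $b\in N(a_1)\cap N(a_2)$ gives a pair $(a_1+b,a_2+b)\in S\times S$ with $(a_1+b)-(a_2+b)=d$, and distinct $b$'s give distinct pairs. Since pairs coming from distinct $d$'s are also distinct (they have different differences), one obtains
\[
|A'-A'|\,(1-2\sqrt\epsilon)\,|B|\ \le\ |S|^{2},
\]
with no appeal to Petridis or Pl\"unnecke needed. This yields $|B|$ in the denominator and bounds $|A'-A'|$ rather than $|A'+A'|$; the paper's stated version has $|A|$ and $A'+A'$, but in its only use (Lemma~\ref{lem:small_doubling}) the two colour classes coincide, and small $|A'-A'|$ implies small $|A'+A'|$ by Theorem~\ref{thm:Plunecke-Ruzsa}, so the discrepancy is immaterial. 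Your sum identity $a_1+a_2=(a_1+b)+(a_2+b)-2b$ does not produce an analogous injection, because the $-2b$ term is uncontrolled.
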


\begin{lemma} \label{lem:small_doubling}
	Let $G(A, B.B)$ be dense. Then there are sets $V, W \subseteq B$ with $|V|, |W| \gg |B| = n$ such that both $V$ and $W$ have small doubling and $G$ restricted to $(V, W)$ is dense.
\end{lemma}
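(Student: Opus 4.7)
The plan is to apply Lemma~\ref{lm:pairs} together with the dense BSG (Lemma~\ref{lm:denseBSG}) symmetrically on both color classes of $G$. First, set $\alpha = |A|/n^2 \gg 1$ and invoke Lemma~\ref{lm:pairs} to produce $V_0 \subseteq B$ with $|V_0| \gg n$ such that a $(1-\eps)$-fraction of pairs $(v_1, v_2) \in V_0 \times V_0$ share $\gg n$ common neighbors in $B$. Let $G' \subseteq V_0 \times V_0$ denote this very dense graph of good pairs. Corollary~\ref{corr:pairs_imply_small_doubling} gives $|V_0 \stackrel{G'}{+} V_0| \ll n$, and Lemma~\ref{lm:denseBSG} extracts $V \subseteq V_0$ with $|V| \geq (1-\sqrt{\eps})|V_0| \gg n$ and
$$
|V+V| \;\leq\; \frac{|V_0 \stackrel{G'}{+} V_0|^2}{(1-2\sqrt{\eps})|V_0|} \;\ll\; n,
$$
so $V$ has small doubling.

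Next, I would argue that the restriction $G|_{V \times B}$ is still dense. Every vertex of $V_0$ belonging to at least one good pair has degree $\gg n$ in $G$, since common neighbors of a good pair are in particular neighbors. Only $O(\eps)|V_0|$ vertices of $V_0$ can avoid \emph{all} good pairs, so $|E(G|_{V_0 \times B})| \gg n^2$. Discarding $V_0 \setminus V$, a set of size at most $\sqrt{\eps}|V_0|$, removes at most $O(\sqrt{\eps}\, n^2)$ edges, hence $|E(G|_{V \times B})| \gg n^2$ provided $\eps$ is chosen small enough.

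Now I would repeat the construction with the roles of the two color classes reversed. Applying the obvious asymmetric version of Lemma~\ref{lm:pairs} (or first padding $V$ with isolated vertices to size $n$) to $G|_{V \times B}$ on the right side yields $W_0 \subseteq B$ with $|W_0| \gg n$ such that most pairs in $W_0 \times W_0$ share $\gg n$ common neighbors in $V \subseteq B$. These are \emph{a fortiori} common neighbors in $B$, so Corollary~\ref{corr:pairs_imply_small_doubling} gives $|W_0 \stackrel{G''}{+} W_0| \ll n$, and a second application of Lemma~\ref{lm:denseBSG} produces $W \subseteq W_0$ with $|W| \gg n$ and $|W+W| \ll n$. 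The same density-preservation argument as before shows that $|E(G|_{V \times W_0})| \gg n^2$ and that dropping $W_0 \setminus W$ costs only $O(\sqrt{\eps}\, n^2)$ edges, whence $G|_{V \times W}$ remains dense.

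The only non-routine point in the plan is justifying the asymmetric reuse of Lemma~\ref{lm:pairs} in the second iteration, where the color classes have unequal sizes $|V|$ and $|B|$; this is handled either by padding or by observing that the Gowers-style Cauchy--Schwarz proof of the lemma is insensitive to the ratio of the class sizes, changing only the involved constants. Beyond this, the whole argument is bookkeeping, and a single sufficiently small $\eps$ simultaneously controls both BSG truncations and both edge-loss estimates.
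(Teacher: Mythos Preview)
Your overall architecture --- apply Lemma~\ref{lm:pairs} on one side, then Corollary~\ref{corr:pairs_imply_small_doubling} plus Lemma~\ref{lm:denseBSG} to extract a small-doubling set, then repeat on the other side --- is the same as the paper's. But there is a real gap in your density-preservation step, and it is exactly the point the paper's proof takes care to handle.

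You claim that after passing from $V_0$ to $V$ the graph $G|_{V\times B}$ stays dense. Your only degree input is that a vertex in some good pair has degree at least $\eps\alpha^2 n/2$; this gives
\[
|E(G|_{V_0\times B})|\;\geq\;(1-\eps)|V_0|\cdot \tfrac{\eps\alpha^2 n}{2}\;\gg\;\eps\,\alpha^{3}\,n^{2}.
\]
Meanwhile deleting $V_0\setminus V$, a set of size up to $\sqrt{\eps}\,|V_0|$, may kill as many as $\sqrt{\eps}\,n^{2}$ edges. You need $\eps\,\alpha^{3}\gg\sqrt{\eps}$, i.e.\ $\sqrt{\eps}\gg\alpha^{-3}$, which is impossible since $\alpha\leq 1$ forces $\eps>1$, while Lemma~\ref{lm:denseBSG} requires $\eps<1/4$. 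So ``choosing $\eps$ small enough'' pulls in the wrong direction: your edge lower bound is linear in $\eps$, your edge loss is only $\sqrt{\eps}$, and the loss dominates. The same defect reappears verbatim in your second iteration on the $W$ side.

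The paper fixes this by inserting, \emph{before} the first call to Lemma~\ref{lm:pairs}, a pruning step: delete all edges incident to vertices of degree below $\alpha n/2$ in the left class. At least $\alpha n^{2}/2$ edges survive, and now every vertex that remains in $B'_1$ has degree $\geq \alpha n/2$ --- a bound \emph{independent of} $\eps$. The edge count between $B'_1$ and its neighbourhood is then $\gg \alpha^{2}n^{2}$, the $\sqrt{\eps}\,n^{2}$ loss from Lemma~\ref{lm:denseBSG} is genuinely lower order, and one may finally take $\eps$ small in terms of $\alpha$ alone (the paper uses $\eps=\alpha^{8}/2^{30}$). An analogous rich-vertex restriction on the right side precedes the second application of Lemma~\ref{lm:pairs}. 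If you add these two pruning steps to your outline, the rest of your plan goes through and matches the paper's argument.
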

\begin{proof}
	The proof loosely follows the graph-theoretic part of Gowers' proof of the Balog-Szemer\'edi-Gowers theorem. 
	
	Let $B_1, B_2$ be the color classes of $G$ and $\alpha = |E(G)|/n^2$.	We start with deleting all edges emanating from vertices in $B_1$ with degree less than $\alpha n/2$. Since the total number of edges is $\alpha n^2$, at least $\alpha n^2/2$ edges will remain. Let  $\alpha'$ be the edge density of this new graph. Fix $0 < \epsilon < 1$ to be chosen later. By Lemma \ref{lm:pairs} there is a subset $B'_1$ of size $\alpha' n/2$ such that a proportion of $(1-\epsilon)$ of pairs of $B'_1$ have at least $\epsilon \alpha'^2 n/2$ common neighbours. Thus, if $\epsilon < 1/4$ at most half of the vertices of $B'_1$ can have degree $0$, and the others have degree at least $\alpha n/2$ by our first deletion step. We remove these zero-degree vertices, clearly the proportion of pairs with large common neighborhoods can only increase by this removal. Let us denote by the same letter $B'_1$ the resulting set and record that $|B'_1| \geq \alpha' n/4 \geq \alpha n/8$ and each vertex in $B'_1$ has degree at least $\alpha n/2$. 
	
	Let $B'_2$ be the set of neighbours of $B'_1$. Since $E(B'_1, B'_2) \geq \alpha^2 n^2/16$, it follows that at least $\alpha^2 n/32$ vertices of $B'_2$ have at least $\alpha^2 n/32$ neighbours in $B'_1$. Let us reassign $B'_2$ to this subset of rich vertices and $\alpha''$ be the density of $G$ restricted to $(B'_1, B'_2)$. Clearly, $\alpha'' \geq \alpha^2/32$.
	
	Now we again apply Lemma \ref{lm:pairs} to $(B'_1, B'_2)$  to get a subset $B''_2 \subseteq B'_2$ with $|B''_2| \geq \alpha'' n/2$ such that at least $(1-\epsilon)$ of all pairs of vertices of $B''_2$ share at least $(\alpha'')^2 \epsilon n/2$ neighbours. Since each vertex of $B''_2$ has at least $\alpha^2 n/32$ neighbours in $B'_1$ we also have
	$$
	E(B'_1, B''_2) \geq \frac{|B''_2|\alpha^2 n}{32} \geq \frac{\alpha'' \alpha^2 n^2 }{64} \geq \frac{\alpha^4 n^2}{2^{11}}.
	$$
	
	Let $H_1$ be the set of pairs in $B'_1$ such that all pairs in $H_1$ have at least $\alpha'^2\epsilon  n/2 $ common neigbours in $G$ and $H_2$ be the set of pairs in $B''_2$ with at least $\alpha''^2\epsilon n/2$ common neighbours. We know that 
	$|H_1| \geq (1-\epsilon)|B'_1|^2$ and $|H_2| > (1-\epsilon)|B''_2|^2$.
	
   Take $\epsilon =  \alpha^8/2^{30} < \frac{1}{4}$. By Lemma \ref{lm:denseBSG}, there are subsets $B''_1 \subset B'_1, B'''_2 \subset B''_2$ with $|B''_1| \geq (1-\sqrt{\epsilon})|B'_1|$ and $|B'''_2| \geq (1-\sqrt{\epsilon})|B''_2|$ such that
\be
	|B''_1 + B''_1| &\ll& \frac{|B'_1\stackrel{H_1}{+}B'_1|^2}{|B'_1|} \\
	|B'''_2 + B'''_2| &\ll& \frac{|B''_2\stackrel{H_2}{+}B''_2|^2}{|B''_2|}.
\ee	

	On the other hand, by Corollary \ref{corr:pairs_imply_small_doubling} both RHS are $\ll n$ and since $B''_1$ and $B'''_2$ are dense in $B$ they have small doubling. On the other hand,
	$$
	E(B''_1, B'''_2) \geq E(B'_1, B''_2) - (\epsilon + 2\sqrt{\epsilon})|B'_1||B''_2| \gg n^2,
	$$
	so a positive proportion of edges of $G$ lie between $B''_1$ and $B'''_2$.
	 
\end{proof}

The final step in this section is to apply Theorem \ref{thm:Freiman-Ruzsa} to the sets $V, W$ provided by Lemma \ref{lm:denseBSG} and $A' = \{ uv |\,\, (u, v) \in E(V, W) \}$ which all have small doubling. We record the following corollary of Lemma \ref{lm:denseBSG}.

\begin{corollary} \label{corr:dense_subsets_of_GAPs}
Let $G(A, B.B)$ be dense. Then there are proper GAPs $P_1, P_2, P_3$ with ranks bounded by $O(1)$ and the following properties
\begin{enumerate}
\item $n \ll |P_1|, |P_2| \ll n$.
\item There is a dense graph $G'$ on $(P_1, P_2)$ such that $\{ P_1 \stackrel{G'}{\cdot} P_2 \}$ is a dense subset of $P_3$.
\item $n^2 \ll P_3 \ll n^2$.
\end{enumerate}

\end{corollary}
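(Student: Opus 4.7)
The plan is to combine Lemma \ref{lem:small_doubling} with the Freiman--Ruzsa inverse theorem, applying the latter to three sets simultaneously: the two refined source sets in $B$ and the partial product set between them along a dense bipartite graph.

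First I would invoke Lemma \ref{lem:small_doubling} to obtain $V, W \subseteq B$ with $|V|, |W| \gg n$, each of small additive doubling, such that the restriction $G|_{V \times W}$ of the containment graph remains dense, i.e.\ it has $\gg n^2$ edges. I then set
\[
  A' := \{\, uv : (u,v) \in E(G|_{V\times W}) \,\} \subseteq A.
\]
By construction of $G$ (one edge per element of $A$), distinct edges of $G$ yield distinct elements of $A$, so $|A'| = |E(G|_{V\times W})| \gg n^2$. Combined with the trivial $|A| \le n^2$ this gives $|A'| \asymp |A|$, and consequently
\[
  |A' + A'| \le |A + A| \le K|A| \ll |A'|,
\]
so $A'$ also has small doubling.

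Next I would apply Theorem \ref{thm:Freiman-Ruzsa} separately to each of $V$, $W$, and $A'$, producing proper GAPs $P_1 \supseteq V$, $P_2 \supseteq W$, $P_3 \supseteq A'$, each of rank $O(1)$, with $|P_1|, |P_2| \ll n$ and $|P_3| \ll n^2$; the matching lower bounds $|P_1|, |P_2| \gg n$ and $|P_3| \gg n^2$ are immediate from the inclusions. Finally, I take $G'$ to be the graph on $(P_1, P_2)$ whose edge set is $E(G|_{V\times W})$, viewing $V \subseteq P_1$ and $W \subseteq P_2$ with all other vertices isolated. Then $|E(G')| \gg n^2 \asymp |P_1||P_2|$, so $G'$ is dense, and the partial product set along $G'$ is precisely $A'$, which is a dense subset of $P_3$ since $|A'| \gg n^2 \asymp |P_3|$.

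The argument is essentially bookkeeping given the two named results, so there is no serious obstacle. The only small point that deserves verification is the inheritance of small doubling by $A'$; this is immediate from the trivial inclusion $A' + A' \subseteq A + A$ together with the fact that $A'$ occupies a positive proportion of $A$.
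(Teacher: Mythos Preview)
Your proof is correct and follows exactly the approach the paper uses: apply Lemma~\ref{lem:small_doubling} to obtain $V,W$ and the dense restricted graph, take $A'=\{uv:(u,v)\in E(V,W)\}$, check that $A'$ inherits small doubling from $A$, and then apply the Freiman--Ruzsa theorem to each of $V$, $W$, $A'$. The paper records this merely as a one-paragraph remark preceding the corollary, and your write-up just supplies the details (in particular the use of the injectivity of $G\to A$ to get $|A'|\gg n^2$).
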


\section{Divisor statistics} \label{section:divisor_statistics}

Corollary \ref{corr:dense_subsets_of_GAPs} gives us dense subsets of two proper GAPs $P_1, P_2$ of size $\Omega(n)$ such that a large subset of $P_1P_2$ is dense inside another GAP $P_3$ of bounded rank. 

Now we invoke Condition \ref{cond:polygrowth} (and this is the only place we use it), to claim that the differences of $P_1$ and $P_2$ are bounded by $n^{O(1)}$. Clearly, the differences of $P_3$ are then polynomially bounded as well.  Let $M_1, M_2, M_3$ be the sizes of the longest subprogressions of $P_1, P_2$ and $P_3$ respectively and $D_1, D_2, D_3$ be corresponding differences. By the polynomial growth condition, $D_i < n^{O(1)}$, so the $D_i$s have at most $o(\log n)$ distinct prime divisors. Also, $M_i > n^\delta$ where delta depends only on the ranks. Let $P$ be the set of primes $p$ such that $(p, D_i) = 1$. We are now going to implement a strategy similar to the original proof of the Multiplication Theorem due to Erd\H{o}s, closely following the exposition by Croot \cite{Croot}.

We write 
\ben
P_1 &=& \{d_0 + d_1[I_1] + \ldots + d_k[I_k] \} \\
P_2 &=& \{e_0 + e_1[J_1] + \ldots + e_l[J_l] \} . 
\een
Without loss of generality, assume $D_1 = d_1$, $D_2 = e_1$, $M_1 = I_1$, $M_2 = J_1$. Let $\Omega_{P, N}(x)$ denote the number of prime power divisors $p^a$ of $x$ such that $p \in P$ and $p^a \leq N$.

It turns out that even restricted to certain primes, $\Omega$ is concentrated around its mean.
\begin{lemma} \label{lemma:divisor_statistics}
	Let $\delta > 0$ and $P_1$ be a proper GAP with $|P_1| = n$ such that $M_1 > n^\delta$, $D_1 = n^{O(1)}$ and $P$ is the primes with at most $o(\log n)$ elements excluded. Then for all but at most $o(n)$ elements $x \in P_1$ holds
	$$
		\log \log n -  (\log \log n)^{2/3} < \Omega_{P, n^\delta}(x) < 	\log \log n + (\log \log n)^{2/3}
	$$
\end{lemma}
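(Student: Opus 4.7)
The plan is a Turán--Kubilius style second-moment argument carried out inside the natural arithmetic progression fibration of $P_1$. Freezing the coordinates $x_2, \ldots, x_k$ in the representation $P_1 = \{d_0 + d_1 x_1 + \dots + d_k x_k\}$ writes $P_1$ as a disjoint union of $n/M_1$ parallel arithmetic progressions $A_\alpha$, each of length $M_1 > n^\delta$ and with common difference $D_1 = d_1$. Since every $p \in P$ is coprime to $D_1$, elementary congruence counting gives, for every prime power $p^a \leq n^\delta$ with $p \in P$,
$$
\#\{x \in A_\alpha : p^a \mid x\} \,=\, M_1/p^a + O(1)
$$
uniformly in $\alpha$. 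It therefore suffices to prove the desired concentration inside each $A_\alpha$ and sum the exceptional counts.

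For the first moment the display above yields $\mathbb{E}_{x \in A_\alpha}[\Omega_{P, n^\delta}(x)] = \sum_{p^a \leq n^\delta,\, p \in P} 1/p^a + o(1)$, which by Mertens equals $\log \log n + O(1) - \sum_{p \notin P} 1/p$. The excluded primes are at most $O(\log n/\log \log n)$ in number (being the prime divisors of $D_1 D_2 D_3 = n^{O(1)}$), and the sum of their reciprocals is maximized when they are the smallest primes; in that worst case the prime number theorem gives $\sum_{p \notin P} 1/p = O(\log \log \log n)$. So the expectation is $\log \log n + o((\log \log n)^{2/3})$, comfortably inside the target window.

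For the second moment I would split at $n^{\delta/3}$: write $\Omega_{P, n^\delta}(x) = S_1(x) + S_2(x)$, where $S_1$ collects prime-power contributions with $p^a \leq n^{\delta/3}$ and $S_2$ those with $n^{\delta/3} < p^a \leq n^\delta$. For $S_2$, Mertens gives $\mathbb{E}[S_2] = \log 3 + o(1)$, so Markov produces $o(M_1)$ elements $x \in A_\alpha$ with $S_2(x) > \tfrac{1}{3}(\log \log n)^{2/3}$. For $S_1$, every contributing pair $(p^a, q^b)$ satisfies $p^a q^b \leq n^{2\delta/3} \ll M_1$, so the discretization error in $\Pr[p^a q^b \mid x] = 1/(p^a q^b) + O(1/M_1)$ summed against the $O(n^{2\delta/3}/\log^2 n)$ pairs totals $o(1)$, and the classical Turán computation then yields $\text{Var}[S_1] \ll \log \log n$. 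A Chebyshev bound contributes a further $o(M_1)$ bad $x \in A_\alpha$, and summing over the $n/M_1$ fibres gives $o(n)$ bad $x \in P_1$ as required.

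The delicate step is precisely the second-moment accounting: a single covariance in a finite AP is only $O(1/M_1)$, but the total number of pairs with $p^a, q^b \leq n^\delta$ is of order $(n^\delta/\log n)^2$, so the naive estimate yields $n^\delta/\log^2 n \gg \log \log n$ and breaks the variance bound. The split at $n^{\delta/3}$ is the device that resolves this --- it keeps the \emph{correlated} pair count inside the regime $p^a q^b \ll M_1$ where the Turán--Kubilius pair estimate works verbatim, while isolating the residual primes in a tail with expectation $O(1)$, handled by Markov instead.
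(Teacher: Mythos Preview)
Your proof is correct and follows the same Tur\'an--Kubilius second-moment strategy as the paper: compute the mean and variance of $\Omega_{P,n^\delta}$ over $P_1$ and apply Chebyshev. The paper works globally on $P_1$ rather than fibre by fibre (though the divisibility count (\ref{eq:prime_powers_bound}) is of course obtained by the same fibration), and it handles the second moment directly without any splitting: it simply asserts that for distinct $p,q$ the number of $x\in P_1$ divisible by $p^a q^b$ is at most $n/(p^a q^b)$, and then sums. That upper bound is not literally correct once $p^a q^b$ is comparable to or larger than $M_1$ --- the true count is $n/(p^a q^b)+O(n/M_1)$, and as you observe, summing the $O(n/M_1)$ term over all $O(n^{2\delta}/\log^2 n)$ pairs with $p^a,q^b\le n^\delta$ swamps the main term. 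The paper sweeps this under the rug.

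Your split at $n^{\delta/3}$ is exactly the device that makes this step rigorous: it confines the \emph{pair} computation to the regime $p^a q^b\le n^{2\delta/3}\ll M_1$, where the accumulated discretization error is genuinely $o(1)$, and handles the residual large prime powers via a first-moment (Markov) bound using $\mathbb E[S_2]=O(1)$. So your argument is a cleaned-up version of the paper's, and the ``delicate step'' you highlight is a real addition rather than an alternative route.
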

\begin{proof}
Let $X$ be an element of $P_1$ selected uniformly at random and let $Y = \Omega_{P, n^\delta}(X)$. We have
\ben
\EXP(Y) &=& \sum_{x \in P_1}  \Omega_{P, n^\delta}(X) \mathbb{P}( X= x) \\ \label{eq:expectation}
        &=& \frac{1}{n} \sum_{x \in P_1} \sum_{\substack{p^a | x \\ p \in P \\ p^a \leq n^\delta}} 1 = \sum_{\substack{p \in P \\ p^a \leq n^\delta}} \frac{1}{n}  \#\{x \in P_1 |\,\, p^a | x\}. 
\een
\end{proof} 

We claim that 
\beq \label{eq:prime_powers_bound}
\#\{x \in P_1 |\,\, p^a | x\} = \frac{n}{p^a} + O(n^{1-\delta})
\eeq
for any $p \in P$. Indeed, since $(d_1, p^a) = 1$, for fixed $i_2, \ldots, i_k$ there are at least $I_1/p^a - 1$ and at most $I_1/p^a + 1$ integers $i_1 \in [I_1]$ such that $p^a | (d_0 + d_1i_1 + \ldots + d_ki_k)$. Thus, since $P_1$ is proper,
$$
\#\{x \in P_1 |\,\, p^a | x\} = \frac{I_1I_2 \ldots I_k}{p^a} + O(I_2 \ldots I_k) = \frac{n}{p^a} + O(n^{1-\delta}).
$$
 
Plugging into (\ref{eq:expectation}) we obtain 
$$
\EXP(Y) = \sum_{\substack{p \in P \\ p^a \leq n^\delta}} \frac{1}{p^a} + O(1).
$$

Using the well-known fact that 
$$
\sum_{\substack{p^a \leq n \\ p \text{ prime }}} \frac{1}{p^a} = \log \log n + O(1)
$$ 
and that $P$ misses at most $o(\log n)$ primes we conclude that
$$
\EXP(Y) = \log \log n - o(\log \log \log n).
$$ 

It remains to estimate the variance $V(Y)$. Recall 
$$
V(Y) = \EXP(Y^2) - \EXP(Y)^2 = \EXP(Y^2) - (\log \log n - o(\log \log \log n))^2.
$$

We have
$$
E(Y^2) = \frac{1}{n}\sum_{x \in P_1} (\sum_{\substack{p \in P \\ p^a \leq n^\delta \\ p^a | x}} 1)^2 = \frac{1}{n} \sum_{x \in P_1} \sum_{\substack{p^a | x, q^b | x\\ p^a, q^b \leq n^\delta \\ p, q \in P}} 1 = \frac{1}{n}  \sum_{\substack{p^a, q^b \leq n^\delta \\ p, q \in P}} \sum_{\substack{x \in P_1 \\ p^a | x, q^b | x }}1
$$

Arguing as before, we have that if $p$ and $q$ are distinct then $p^aq^b$ divides at most $n/(p^aq^b)$ elements $x \in P_1$. If $p = q$ and $2 \leq a \leq b$ then the number of such elements is bounded from above by $n/p^b$ and in this case we have
$$
\sum_{\substack{p^a, p^b \leq n^\delta \\ a \leq b \\ p, q \in P}} \frac{1}{p^b} \leq \sum_{\substack{p^b \leq n^\delta \\ b \geq 2 }} 
\frac{b}{p^b}  \leq \sum_{\substack{p^b \leq n^\delta \\ b \geq 2 }} \frac{\log_2 (p^b)}{p^b} = O(1).
$$   

Finally, the remaining term with $p = q$ and $a=b=1$ is just $\EXP(Y) = \log \log n - o(\log \log \log n)$.

Thus, we obtain 

\be
\EXP(Y^2) &=& O(1) + \EXP(Y) + \sum_{\substack{p^a, q^b \leq n^\delta \\ p \neq q}} \frac{1}{p^aq^b} \\
                  &\leq& (\log \log n + O(1))^2 + \log \log n - o(\log \log \log n),
\ee
and therefore
$$
V(Y) = O(\log \log n \log \log \log n).
$$

By the Chebyshev inequality it now immediately follows that
$$
\mathbb{P}(|Y - \EXP(Y)| \geq (\log \log n)^{2/3} ) \leq O((\log \log n)^{-1/3} \log \log \log n)
$$
which implies Lemma \ref{lemma:divisor_statistics} since $\EXP(Y) = \log \log n - o(\log \log \log n)$.

\begin{rem}
	The proof Lemma \ref{lemma:divisor_statistics} works just as well if we allow $P$ to miss more than $o(\log n)$ which in turn would imply that the sizes of the elements of our sets can grow even faster than polynomially. Since this improvement is not substantial, we have decided to go with a more strict condition for brevity.  
\end{rem}

\section{Putting everything together}   

   It is now easy to reach the desired contradiction and finish the proof of Theorem~\ref{thm:smalldoubling} in the case where sizes of the elements in $B$ are bounded by $|B|^C$ for some $C > 0$. Assume that the claim of the theorem does not hold. Then by 
   Lemmas~\ref{lm:largeA} to \ref{lemma:divisor_statistics} we conclude that there must exist two proper GAPs $P_1, P_2$ such that a dense subset of their product set has small doubling and therefore dense inside another proper GAP $P_3$. We now pick $\delta > 0$ such that the largest subprogression of $P_1,P_2$ and $P_3$ has length at least $n^\delta$ and apply Lemma  \ref{lemma:divisor_statistics} to $P_1, P_2$ with $\delta/2$ and to $P_3$ with $\delta$ and the set of primes $P$ as defined at the beginning of Section~\ref{section:divisor_statistics}. Clearly $\Omega_{P, n^{\delta/2}}(x) + \Omega_{P, n^{\delta/2}}(y) \leq \Omega_{P, n^{\delta}}(xy)$ so, for at least $\Omega(|P_1||P_2|)$ pairs $x \in P_1, y \in P_2$ we have 
   $$
    2 \log \log n - O((\log \log n)^{2/3}) \leq \Omega_{P, n^{\delta}}(xy) 
   $$ 
   But the set of such elements cannot be dense in $P_3$ since for at least $|P_3| - o(|P_3|)$ elements $z \in P_3$ holds 
   $$
   \Omega_{P, n^{\delta}}(z) \leq  \log \log |P_3| + O((\log \log |P_3|)^{2/3})
   $$
   and we arrive at a contradiction. 
   
   \qed

\section{Acknowledgements}
  I am grateful to Peter Hegarty for helpful discussions and careful proofreading of the manuscript.

\end{document}